\theoremstyle{plain}
\newtheorem{thm}{Theorem}[section]
\newtheorem{prop}[thm]{Proposition}
\newtheorem{cor}[thm]{Corollary}
\newtheorem{rmk}[thm]{Remark}
\newtheorem{prb}[thm]{Problem}
\theoremstyle{remark}
\def\pmc#1{\setbox0=\hbox{#1}
    \kern-.1em\copy0\kern-\wd0
    \kern.1em\copy0\kern-\wd0}
\begin{document}

\title
{On the homology of the Harmonic Archipelago}

%\centerline{\bf \tt This is the version of ARCHIPELAGO.TEX of \today. Last changes by Umed.} \vskip 1.0cm

\author[U.~H.~Karimov]{Umed H.~Karimov}
\address{Institute of Mathematics,
Academy of Sciences of Tajikistan, Ul. Ainy $299^A$, Dushanbe
734063, Tajikistan} \email{umedkarimov@gmail.com}

\author[D.~Repov\v s]{Du\v san Repov\v s}
\address{Faculty of Education and
Faculty of Mathematics and Physics,
University of Ljubljana, P.O.Box 2964,
Ljubljana 1001, Slovenia} \email{dusan.repovs@guest.arnes.si}

\subjclass[2010]{Primary 54F15, 55N15; Secondary 54G20, 57M05}
\keywords{Griffiths space, Harmonic archipelago, Hawaiian earring,
fundamental group, trivial shape, Peano continuum, wild topology}
\begin{abstract}
We calculate the singular homology and \v Cech cohomology groups
of the {Harmonic archipelago}. As a
corollary, we prove that this space is not homotopy equivalent to
the Griffiths space. This is interesting in view of Eda's proof that the first singular
homology groups of these spaces are isomorphic.
\end{abstract}

\date{\today}
\maketitle
\date{\today}
\section{Introduction}

The following interesting problem from contemporary theory of
Peano continua and combinatorial group theory has been widely
discussed and investigated (most recently at the 2011 workshop on
wild topology in Strobl, Austria \cite{C}), because it concerns
two well-known and important $2$-dimensional spaces,
namely the
{\it Griffiths} space $\mathcal{G}$ and the {\sl Harmonic}
archipelago ${\mathcal{HA}}$ (cf. \cite{C}):

\begin{prb}\label{Main problem}
Are the fundamental groups of
the Griffiths space $\mathcal{G}$ and the Harmonic archipelago
${\mathcal{HA}}$
isomorphic?
\end{prb}

This difficult problem remains open. Its solution will require
a
deep
understanding
of the structure of the fundamental groups of
these
spaces.
In the present
paper, which is a step in this direction,
we shall investigate
the {\it abelianization} of
the fundamental group of the Harmonic archipelago
${\mathcal{HA}}$.

It is well
known (cf. e.g. \cite[Theorem 2A.1]{H})
that the $1$-dimensional singular homology group with integer
coefficients $H_1 (X;\mathbb Z)$ of a path-connected space $X$
is isomorphic to
the abelianization of the fundamental group $\pi_1(X)$ of
$X$:
$$H_1(X;\mathbb Z) \cong \pi_1(X)/[\pi_1(X), \pi_1(X)].$$
Our first result is
based on the structure of the homology groups of the
Hawaiian earring
$\mathbb{H}$
(an alternative proof, using infinitary words, was
given by Eda \cite{E1}):

\begin{thm}\label{Homology.}
Let $\mathcal{HA}$ denote the Harmonic archipelago.
Then
$$H_1(\mathcal{HA};\mathbb Z)
\cong
(\prod_{i\in \mathbb{N}} \mathbb{Z})/(\sum_{i\in
\mathbb{N}}\mathbb{Z}),$$
whereas
$H_n(\mathcal{HA};\mathbb Z)\cong 0,$
for all $n\ge
 2.$
\end{thm}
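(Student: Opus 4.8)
The plan is to reduce the computation to that of the Hawaiian earring $\mathbb{H}$, whose singular homology is known, by means of the inclusion $i\colon\mathbb{H}\hookrightarrow\mathcal{HA}$. Recall that $\mathcal{HA}$ is $\mathbb{H}$ — with bad point $o$, $n$-th circle $C_n$ and standard loop $\ell_n$ — together with, for each $n$, a $2$-cell $D_n$ (the $n$-th ``island'') attached along the loop $\ell_n\ell_{n+1}^{-1}$; thus $\mathcal{HA}$ is $2$-dimensional and $\mathcal{HA}/\mathbb{H}$ is a bouquet of $2$-spheres. Write $e_n=[\ell_n]\in H_1(\mathbb{H};\mathbb Z)$ and let $\Phi\colon H_1(\mathbb{H};\mathbb Z)\to\prod_n\mathbb Z$ be the winding-number homomorphism induced by the retractions $r_n\colon\mathbb{H}\to C_n$. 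From the structure of $H_\ast(\mathbb{H})$ I will use that $H_n(\mathbb{H};\mathbb Z)=0$ for $n\ge 2$, that $\Phi$ is surjective, that $\ker\Phi$ is divisible, and that $\Phi^{-1}\big(\bigoplus_n\mathbb Z\big)$ is generated by the $e_n$ together with $\ker\Phi$.

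Next I would invoke the long exact sequence of the pair $(\mathcal{HA},\mathbb{H})$: this is an NDR-pair (a uniform collar of the bases of the islands deformation retracts to $\mathbb{H}$), so $H_n(\mathcal{HA},\mathbb{H})\cong\widetilde H_n(\mathcal{HA}/\mathbb{H})$. As $\mathcal{HA}/\mathbb{H}$ is a bouquet of $2$-spheres one has $\widetilde H_0=\widetilde H_1=0$ there, whence $i_\ast\colon H_1(\mathbb{H};\mathbb Z)\to H_1(\mathcal{HA};\mathbb Z)$ is onto, $H_1(\mathcal{HA};\mathbb Z)\cong H_1(\mathbb{H};\mathbb Z)/\ker i_\ast$, and for $n\ge 3$ one is reduced to showing $\widetilde H_n(\mathcal{HA}/\mathbb{H})=0$, while $H_2(\mathcal{HA};\mathbb Z)=\ker\big(\partial\colon H_2(\mathcal{HA},\mathbb{H})\to H_1(\mathbb{H};\mathbb Z)\big)$. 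So the two things to establish are: (a) $\ker i_\ast=\Phi^{-1}\big(\bigoplus_n\mathbb Z\big)$ — which together with surjectivity of $\Phi$ yields $H_1(\mathcal{HA};\mathbb Z)\cong H_1(\mathbb{H};\mathbb Z)/\Phi^{-1}(\bigoplus_n\mathbb Z)\cong(\prod_n\mathbb Z)/(\bigoplus_n\mathbb Z)$; and (b) $\widetilde H_n(\mathcal{HA}/\mathbb{H})=0$ for $n\ge 3$ and $\partial$ injective, giving $H_n(\mathcal{HA};\mathbb Z)=0$ for all $n\ge 2$.

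The heart of the matter is (a). That $e_n-e_{n+1}\in\ker i_\ast$ is immediate, since $D_n$ is a singular $2$-chain with boundary $\ell_n\ell_{n+1}^{-1}$; but one must also kill every $e_n$ and all of the divisible part $\ker\Phi$. For this I would construct explicit singular $2$-chains in $\mathcal{HA}$ that sweep simultaneously through infinitely many of the islands $D_n,D_{n+1},D_{n+2},\dots$: this is possible precisely because the attaching loops $\ell_n\ell_{n+1}^{-1}$ shrink to $o$ while the islands themselves do not shrink — their tips accumulate along an arc (the ``pole''), which supplies the room to route such a chain — and this is exactly the ``archipelago'' feature that makes $\ell_1$ null-homologous although it is not null-homotopic. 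Telescoping a chain of this kind presents $\ell_n$ as a boundary, and a variant assembled from commutator loops disposes of $\ker\Phi$; conversely, an asymptotic-winding-number invariant of $H_1(\mathcal{HA})$ — manufactured from the retractions of $\mathcal{HA}$ onto its self-similar ``tails'' $\mathbb{H}_{\ge n}\cup\bigcup_{k\ge n}D_k$ together with the known structure of $H_1(\mathbb{H})$ — should show that a class whose winding numbers are not eventually $0$ cannot bound in $\mathcal{HA}$, giving the reverse inclusion. I expect the main obstacle to be precisely this control of the boundaries of the ``archipelago chains'' (and the parallel point in (b): the $2$-spheres of $\mathcal{HA}/\mathbb{H}$ accumulate, so Barratt--Milnor-type classes could a priori appear in degrees $\ge 3$, and one must rule them out by noting that, unlike shrinking wedge spheres, these spheres are anchored, each being attached along a loop non-trivial in $H_1(\mathbb{H})$, so that the only new homology of $\mathcal{HA}/\mathbb{H}$ lies in degree $2$ and $\partial$ carries it injectively). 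Underlying everything is the failure of neighbourhoods of $o$ and of the pole to be homotopically tame, which forces hands-on work with singular chains in place of formal machinery — in contrast to Eda's infinitary-word argument.
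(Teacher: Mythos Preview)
Your central claim (a), that $\ker i_\ast=\Phi^{-1}\big(\bigoplus_n\mathbb Z\big)$, is false, and the telescoping construction you propose cannot work. There is no ``pole'' in $\mathcal{HA}$: the space is noncompact, and the tips of the islands converge to a point \emph{not} in $\mathcal{HA}$. Consequently any singular $2$-chain has compact support, and a compact $K\subset\mathcal{HA}$ meets $\{z\ge\varepsilon\}$ in only finitely many islands (otherwise one produces a sequence in $K$ with no limit in $\mathcal{HA}$). Thus $K$ lies, up to a deformation retraction, in $\mathbb H\cup D_{n_1}\cup\dots\cup D_{n_k}$, where $[\ell_1]$ survives (its image under $\Phi$ has coordinate-sum $1$, while the attaching classes $e_{n_i}-e_{n_i+1}$ all have coordinate-sum $0$). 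So $[\ell_1]\ne 0$ in $H_1(\mathcal{HA})$; a fortiori no nonzero element of the divisible group $\ker\Phi$ dies, since Mayer--Vietoris with $U=\{z<2/3\}$, $V=\{z>1/3\}$ gives exactly $\ker i_\ast=\langle e_n-e_{n+1}:n\ge 1\rangle$, and $\Phi$ is injective on this subgroup.

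This is also why the paper's argument looks so different from yours. The paper passes to a homotopy-equivalent ``formal'' model in which each island caps a single circle, so Mayer--Vietoris yields the cleaner kernel $\langle e_n\rangle$; but even then $\ker\Phi$ is \emph{not} killed, and the identification with $\big(\prod\mathbb Z\big)/\big(\sum\mathbb Z\big)$ does not come from a direct quotient via $\Phi$. Instead one uses the Eda--Kawamura splitting $0\to\ker\sigma\to H_1(\mathbb H)\stackrel{\sigma}{\to}\prod\mathbb Z\to 0$ with $\ker\sigma\cong\prod\mathbb Z/\sum\mathbb Z$ to produce a short exact sequence
\[
0\longrightarrow \prod\mathbb Z/\textstyle\sum\mathbb Z \longrightarrow H_1(\mathcal{HA})\longrightarrow \prod\mathbb Z/\textstyle\sum\mathbb Z \longrightarrow 0,
\]
and then the algebraic compactness and torsion-freeness of $\prod\mathbb Z/\sum\mathbb Z$ to split it, obtaining $\big(\prod/\sum\big)^{2}\cong\prod/\sum$. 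Your route, even if (a) were repaired, would still need a comparable algebraic step. For $H_n(\mathcal{HA})=0$, $n\ge2$, the paper avoids the quotient $\mathcal{HA}/\mathbb H$ altogether (whose topology is \emph{not} that of a CW wedge of $2$-spheres, so your ``anchored spheres'' heuristic would itself require justification): it simply uses compact supports to write $H_n(\mathcal{HA})$ as a direct limit over Peano subcontinua homotopy equivalent to $\mathbb H$, and then the Curtis--Fort vanishing $H_n(\text{1-dimensional})=0$ for $n>1$.
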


Eda proved \cite{E1} that the Griffiths space $\mathcal{G}$ and
the Harmonic archipelago $\mathcal{HA}$ have isomorphic
$1$-dimensional singular homology groups,
$$H_{1}(\mathcal{G}; \mathbb Z)\cong
H_{1}(\mathcal{HA}; \mathbb Z).$$

Now, it is well known that the Griffiths space $\mathcal{G}$ is
cell-like and therefore it has trivial \v Cech cohomology groups,
$\check{H}^{*}(\mathcal{G};\mathbb Z)\cong
\check{H}^{*}(pt;\mathbb Z).$

On the other hand, by our second main result stated below, the \v
Cech cohomology of the Harmonic archipelago $\mathcal{HA}$ does
not vanish:

%Theorem~\ref{H^2(HA)}, $\check{H}^2(\mathcal{HA};\mathbb Z)\ncong 0.$The following is our second main result which shows that

\begin{thm}\label{H^2(HA)}
Let $\mathcal{HA}$ denote the Harmonic archipelago. Then
%The \v Cech cohomology group with integer coefficients
$$\check{H}^2(\mathcal{HA}; \mathbb Z)\cong (\prod_{i\in \mathbb{N}}
\mathbb{Z})/(\sum_{i\in \mathbb{N}}\mathbb{Z}),$$ whereas
$\check{H}^n(\mathcal{HA}; \mathbb Z)\cong 0,$ for all $n \neq 0,
2.$
\end{thm}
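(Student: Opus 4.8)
The plan is to compute $\check H^{*}(\mathcal{HA};\mathbb Z)$ from the long exact Čech cohomology sequence of the pair $(\mathcal{HA},\mathbb H)$, where $\mathbb H\subset\mathcal{HA}$ is the Hawaiian earring carrying the $2$-cells of the archipelago. Recall that $\mathcal{HA}$ is obtained from $\mathbb H=\bigvee_{n\ge 1}C_n$ by attaching, for each $n$, a $2$-cell $D_n$ along the loop $\ell_n$ running once around $C_n$ and once backwards around $C_{n+1}$, the cells $D_n$ having uniformly bounded-below diameters. Since $\mathcal{HA}$ is a $2$-dimensional Peano continuum, it is immediate that $\check H^{0}(\mathcal{HA};\mathbb Z)\cong\mathbb Z$ and $\check H^{n}(\mathcal{HA};\mathbb Z)\cong 0$ for $n\ge 3$, so all the content lies in dimensions $1$ and $2$.

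First I would record the Čech cohomology of the pieces. The Hawaiian earring is the inverse limit of the finite wedges $\mathbb H_k=C_1\vee\cdots\vee C_k$ under the maps collapsing $C_{k+1}$, so continuity of Čech cohomology gives $\check H^{1}(\mathbb H;\mathbb Z)\cong\varinjlim_k\mathbb Z^{k}\cong\sum_{i\in\mathbb N}\mathbb Z$ and $\check H^{n}(\mathbb H;\mathbb Z)\cong 0$ for $n\ge 2$. The essential input is the relative group: excising a closed regular neighbourhood $N$ of $\mathbb H$ in $\mathcal{HA}$, the complement deformation retracts onto (a compactification of) the disjoint union of the cores of the cells $D_n$, and, using that the cohomology of a disjoint union is the product of the cohomologies of the summands, one obtains
$$\check H^{n}(\mathcal{HA},\mathbb H;\mathbb Z)\cong\begin{cases}\prod_{i\in\mathbb N}\mathbb Z,& n=2,\\[-1pt] 0,& n\ne 2,\end{cases}$$
a generator of the $m$-th factor being the relative class dual to $D_m$.

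With these inputs the exact sequence of the pair collapses to
$$0\to\check H^{1}(\mathcal{HA};\mathbb Z)\to\check H^{1}(\mathbb H;\mathbb Z)\xrightarrow{\ \delta\ }\check H^{2}(\mathcal{HA},\mathbb H;\mathbb Z)\to\check H^{2}(\mathcal{HA};\mathbb Z)\to 0,$$
i.e. $0\to\check H^{1}(\mathcal{HA};\mathbb Z)\to\sum_{i\in\mathbb N}\mathbb Z\xrightarrow{\ \delta\ }\prod_{i\in\mathbb N}\mathbb Z\to\check H^{2}(\mathcal{HA};\mathbb Z)\to 0$, while all remaining terms force $\check H^{n}(\mathcal{HA};\mathbb Z)\cong 0$ for $n\ne 0,2$. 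It then remains to identify $\delta$. Writing $\gamma_n\in\check H^{1}(\mathbb H;\mathbb Z)$ for the standard generator dual to $C_n$, the Kronecker pairing against the relative classes $[D_m]$ gives $\langle\delta\gamma_n,[D_m]\rangle=\langle\gamma_n,\partial[D_m]\rangle=\langle\gamma_n,[C_m]-[C_{m+1}]\rangle=\delta_{nm}-\delta_{n,m+1}$; hence $\delta\gamma_n=e_n-e_{n-1}$ (with $e_0=0$) in $\prod_{i\in\mathbb N}\mathbb Z$. Since the elements $e_1,e_2-e_1,e_3-e_2,\dots$ are independent and generate precisely $\sum_{i\in\mathbb N}\mathbb Z\subset\prod_{i\in\mathbb N}\mathbb Z$, the map $\delta$ is injective with image $\sum_{i\in\mathbb N}\mathbb Z$. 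Therefore $\check H^{1}(\mathcal{HA};\mathbb Z)=\ker\delta=0$ and $\check H^{2}(\mathcal{HA};\mathbb Z)=\operatorname{coker}\delta\cong\bigl(\prod_{i\in\mathbb N}\mathbb Z\bigr)/\bigl(\sum_{i\in\mathbb N}\mathbb Z\bigr)$, as claimed.

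I expect the main obstacle to be the relative computation $\check H^{2}(\mathcal{HA},\mathbb H;\mathbb Z)\cong\prod_{i\in\mathbb N}\mathbb Z$: one must control the Čech cohomology of the non-compact complement of a neighbourhood of $\mathbb H$ together with its wild limit set, and it is exactly here that the distinction between the product and the direct sum appears — the same phenomenon that makes $\mathcal{HA}$ fail to be cell-like, in contrast with the Griffiths space. I would handle it either by approximating $(\mathcal{HA},\mathbb H)$ by polyhedral pairs and passing to the limit, or by a Mayer--Vietoris decomposition $\mathcal{HA}=U_n\cup V_n$ into the part carrying the first $n$ cells and a shifted homeomorphic copy of the archipelago, iterated over $n$; the remaining bookkeeping is formal homological algebra entirely parallel to the computation of $H_1(\mathcal{HA};\mathbb Z)$ in Theorem~\ref{Homology.}.
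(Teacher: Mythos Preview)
Your approach is correct in outline and genuinely different from the paper's. The paper never considers the pair $(\mathcal{HA},\mathbb H)$. Instead it writes the (homotopy-equivalent) Formal Harmonic archipelago $HA$ as an increasing union of open sets $U_i$, each homotopy equivalent to the Hawaiian earring, and applies the Milnor sequence
\[
0\longrightarrow{\varprojlim}^{1}\check H^{\,n-1}(U_i)\longrightarrow\check H^{\,n}(HA)\longrightarrow\varprojlim\check H^{\,n}(U_i)\longrightarrow 0.
\]
The bonding maps on $\check H^{1}(U_i)\cong\sum_{j}\mathbb Z$ are the shifts $p(a_1,a_2,\dots)=(0,a_1,a_2,\dots)$; a short six-term computation then identifies ${\varprojlim}^{1}$ of this tower with $(\prod\mathbb Z)/(\sum\mathbb Z)$, while $\varprojlim\check H^{2}(U_i)=0$ by one-dimensionality of $U_i$, and $\varprojlim\check H^{1}(U_i)=0$ because the shifts have trivial inverse limit. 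Your route via the long exact sequence of $(\mathcal{HA},\mathbb H)$ is more elementary in that it avoids derived functors entirely; the paper's route, conversely, sidesteps any delicate excision in a noncompact space by staying inside absolute groups.

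Two corrections to your write-up. First, $\mathcal{HA}$ is \emph{not} a Peano continuum: it is noncompact, exactly because the tall disks accumulate only at a missing limit point. (The dimension argument for $n\ge 3$ survives.) Second, your description of the relative step is slightly garbled: the complement of a closed regular neighbourhood of $\mathbb H$ is already a \emph{discrete} countable disjoint union of closed disks --- no compactification enters --- and the product arises simply because \v Cech cohomology takes disjoint unions to products. Once one checks that the disk-tops are pairwise separated in $\mathcal{HA}$ (they lie in disjoint $y$-slabs), excision gives $\check H^{2}(\mathcal{HA},\mathbb H)\cong\prod_n\check H^{2}(D^2,S^1)\cong\prod_n\mathbb Z$ directly, so the obstacle you flag is milder than you expect. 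Your identification of $\delta$ and its image is correct.
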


As an immediate consequence we obtain the following important corollary:

\begin{cor}
The Griffiths  space
$\mathcal{G}$
and
the Harmonic archipelago
$\mathcal{HA}$
are not homotopy equivalent.
\end{cor}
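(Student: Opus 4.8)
The plan is to exploit the fact that \v Cech cohomology with integer coefficients is invariant under homotopy equivalence (indeed, it is even a shape invariant): any homotopy equivalence $f\colon X\to Y$ induces isomorphisms $f^{*}\colon \check H^{n}(Y;\mathbb Z)\xrightarrow{\ \cong\ }\check H^{n}(X;\mathbb Z)$ for every $n\ge 0$. Granting this, I would argue by contradiction: suppose that $\mathcal G$ and $\mathcal{HA}$ were homotopy equivalent. Then in particular $\check H^{2}(\mathcal G;\mathbb Z)\cong \check H^{2}(\mathcal{HA};\mathbb Z)$, and it remains only to observe that these two groups are in fact non-isomorphic.

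For the left-hand side I would invoke the fact recorded above, namely that the Griffiths space $\mathcal G$ is cell-like, so that $\check H^{*}(\mathcal G;\mathbb Z)\cong \check H^{*}(pt;\mathbb Z)$; in particular $\check H^{2}(\mathcal G;\mathbb Z)\cong 0$. For the right-hand side I would quote Theorem~\ref{H^2(HA)}, which gives $\check H^{2}(\mathcal{HA};\mathbb Z)\cong (\prod_{i\in\mathbb N}\mathbb Z)/(\sum_{i\in\mathbb N}\mathbb Z)$. The only point requiring a word of justification is that this quotient does not vanish: the class of the constant sequence $(1,1,1,\dots)$ is nonzero, since $\sum_{i\in\mathbb N}\mathbb Z$ consists exactly of the eventually-zero sequences (one can even note that the quotient is uncountable). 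Hence $\check H^{2}(\mathcal{HA};\mathbb Z)\not\cong 0\cong \check H^{2}(\mathcal G;\mathbb Z)$, which contradicts the supposed homotopy equivalence.

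There is essentially no obstacle left once Theorem~\ref{H^2(HA)} is in hand: all of the difficulty of the Corollary is absorbed into that computation, together with the classical input that cell-like spaces have trivial \v Cech cohomology. If anything, I would phrase the conclusion so as to make transparent that it is the \emph{non-vanishing} of $\check H^{2}(\mathcal{HA};\mathbb Z)$ which does the work: homotopy equivalent (even shape equivalent) spaces have isomorphic \v Cech cohomology groups in every degree, and here degree $2$ already tells the two spaces apart — even though, by Eda's theorem, their first singular homology groups coincide.
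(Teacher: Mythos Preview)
Your argument is correct and is exactly the one the paper intends: the Corollary is stated there as an ``immediate consequence'' of Theorem~\ref{H^2(HA)} together with the remark that $\mathcal{G}$ is cell-like and hence has trivial \v Cech cohomology. You have simply made explicit the homotopy-invariance step and the nonvanishing of $(\prod\mathbb Z)/(\sum\mathbb Z)$, which the paper leaves to the reader.
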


\section{Preliminaries}

The constructions of
the Griffiths space $\mathcal{G}$ and the
Harmonic archipelago $\mathcal{HA}$
are based on the {\it Hawaiian
earring} $\mathbb{H}$, a
classical $1$-dimensional planar
Peano continuum: % $\mathbb{R}^2:$
$$\mathbb{H} = \bigcup_{n\in \mathbb{N}} \{(x,y) \in \mathbb{R}^2 |\ x^2 + (y - \frac{1}{n})^2 =
(\frac{1}{n})^2\}.$$

The {\it Griffiths space} $\mathcal{G}$ is a one-point union of
two cones over the Hawaiian earring $\mathbb{H}$
(cf. \cite{G}): 
Consider two copies
of the Hawaiian earring
$\mathbb{H}$
in $\mathbb{R}^2 \times \{0\} \subset \mathbb{R}^3$
$$\mathbb{H}_+ = \bigcup_{n\in \mathbb{N}} \{(x,y, 0) \in \mathbb{R}^3 |\ x^2 + (y - \frac{1}{n})^2 =
(\frac{1}{n})^2\}$$ and $$\mathbb{H}_-= \bigcup_{n\in \mathbb{N}}
\{(x,y, 0) \in \mathbb{R}^3 |\ x^2 + (y + \frac{1}{n})^2 =
(\frac{1}{n})^2\}.$$

Let $C(\mathbb{H}_+, (0, 0, 1))$ and $C(\mathbb{H}_-, (0, 0, -1))$
be two cones on the spaces $\mathbb{H}_+$ and $\mathbb{H}_-$  with
vertices at the points $(0, 0, 1)$ and $(0, 0, -1),$ respectively.
The Griffiths space $\mathcal{G}$ is then defined as the following
subspace of $\mathbb{R}^3:$
$$\mathcal{G} = C(\mathbb{H}_+, (0, 0, 1)) \cup C(\mathbb{H}_-, (0, 0, -1)).$$
Griffiths \cite{G}
proved that the fundamental group of this one-point
union of contractible spaces is nontrivial.

The {\sl Harmonic archipelago} $\mathcal{HA}$ was introduced by
Bogley and Sieradski \cite{BS}.
It can be simply
described as follows:
$\mathcal{HA}$ is a noncompact
space which
is obtained by adjoining a sequence of 'tall' disks
between consecutive loops of the Hawaiian earring (cf. \cite{M}).

The following proposition
will be 
useful in the
sequel (cf. e.g.
\cite[Corollary 0.21]{H} or
\cite[Theorem 1.4.13]{S}).

\begin{prop}\label{deform}
{\it Spaces $X$ and $Y$ are
homotopy equivalent if and only if there is a space $Z$,
containing both $X$ and $Y$ as deformation retracts}.
\end{prop}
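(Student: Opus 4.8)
The plan is to establish the two implications separately, the nontrivial one via the mapping cylinder. First I would dispatch the direction that assumes the existence of $Z$. If $X$ and $Y$ are both deformation retracts of $Z$, then by definition the inclusions $i_X:X\hookrightarrow Z$ and $i_Y:Y\hookrightarrow Z$ admit retractions $r_X:Z\to X$ and $r_Y:Z\to Y$ with $i_X\circ r_X\simeq\mathrm{id}_Z$ and $i_Y\circ r_Y\simeq\mathrm{id}_Z$; in particular each inclusion is a homotopy equivalence whose homotopy inverse is its retraction. Consequently the composite $r_Y\circ i_X:X\to Y$ is a composition of homotopy equivalences and hence itself one, so $X$ and $Y$ are homotopy equivalent.

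For the converse I would start from a homotopy equivalence $f:X\to Y$ and take $Z$ to be the mapping cylinder $M_f=(X\times[0,1]\sqcup Y)/\!\sim$, where $(x,1)\sim f(x)$, viewing $X=X\times\{0\}$ and $Y$ as its evident subspaces. The first step is the classical observation that $Y$ is a strong deformation retract of $M_f$ for \emph{any} $f$: the homotopy that pushes each point $(x,t)$ along the cylinder direction to $(x,1)\sim f(x)$, while keeping $Y$ fixed, deforms $M_f$ onto $Y$. This uses nothing about $f$.

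The hard part will be to show that $X=X\times\{0\}$ is likewise a deformation retract of $M_f$, and this is where the hypothesis on $f$ is needed. The inclusion $j:X\hookrightarrow M_f$ is homotopic to $i_Y\circ f$, where $i_Y:Y\hookrightarrow M_f$ is the inclusion; since $i_Y$ is a homotopy equivalence by the previous step and $f$ is one by assumption, $j$ is a homotopy equivalence. Moreover $X\times\{0\}\hookrightarrow M_f$ is a closed cofibration, the pair $(M_f,X\times\{0\})$ enjoying the homotopy extension property via the standard retraction of $M_f\times[0,1]$ onto $(M_f\times\{0\})\cup(X\times\{0\}\times[0,1])$. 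The main obstacle is precisely the passage from ``$j$ is a cofibration and a homotopy equivalence'' to ``$X$ is a deformation retract of $M_f$''; this is the deformation-retract criterion underlying the references \cite[Corollary 0.21]{H} and \cite[Theorem 1.4.13]{S}, which I would invoke rather than reprove. With both $X$ and $Y$ exhibited as deformation retracts, the space $Z=M_f$ witnesses the claim and the proof is complete.
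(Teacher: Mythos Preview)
Your argument is correct and is precisely the standard mapping-cylinder proof. The paper itself does not supply a proof of this proposition at all; it simply cites \cite[Corollary~0.21]{H} and \cite[Theorem~1.4.13]{S}, and your sketch is essentially a recapitulation of Hatcher's argument for Corollary~0.21, so there is nothing to compare.
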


Let
$$C_n = \{(x,y, 0)\in \mathbb{R}^3 |\ x^2 + (y-\frac{1}{n})^2 =
(\frac{1}{3n(n+1)})^2\}_{n\in \mathbb{N}}$$
be a countable number
of circles and $\theta = (0, 0, 0)$ the origin of
$\mathbb{R}^3$.
It follows by Proposition~\ref{deform} that the
Harmonic archipelago $\mathcal{HA}$ is homotopy equivalent to the
following subspace of $\mathbb{R}^3$ consisting of all cones
$C(C_n, (0, \frac{1}{n}, 1))$ over
the circles $C_n$, with the
vertices at the points $(0, \frac{1}{n}, 1),$ $n \in \mathbb{N},$
connected by the segments and the point $\{\theta\}$, which we
shall denote by $\emph{HA}$ and call the {\it Formal Harmonic
archipelago}:
$$\emph{HA} = \bigcup_{n = 1}^{\infty} C(C_n, (0,
\frac{1}{n}, 1))\cup$$
$$\bigcup_{n = 1}^{\infty}\{(0, y, 0)| y \in [
\frac{3n+7}{3(n+1)(n+2)}, \frac{3n+2}{3(n)(n+1)}]\} \cup
\{\theta\}.$$

The {\it Modified Hawaiian earring} $\mathcal{MH}$
is defined as follows:
$$\mathcal{MH} = \bigcup_{n = 1}^{\infty} C_n \cup \bigcup_{n =
1}^{\infty}\{(0, y, 0)| y \in [\frac{3n+7}{3(n+1)(n+2)},
\frac{3n+2}{3(n)(n+1)}]\} \cup \{\theta\}.$$

%%%%%%%%%%%%%%%%%%%%%%%%%%%%%%%%%%%%%%%
%%%%%%%%%%%%%%%%%%%%%%%%%%%%%%%%%%%%%%%
%%%%%%%%%%%%%%%%%%%%%%%%%%%%%%%%%%%%%%%
%%%%%%%%%%%%%%%%%%%%%%%%%%%%%%%%%%%%%%%
\begin{figure}[fig1]
\includegraphics[width=1.00\textwidth]{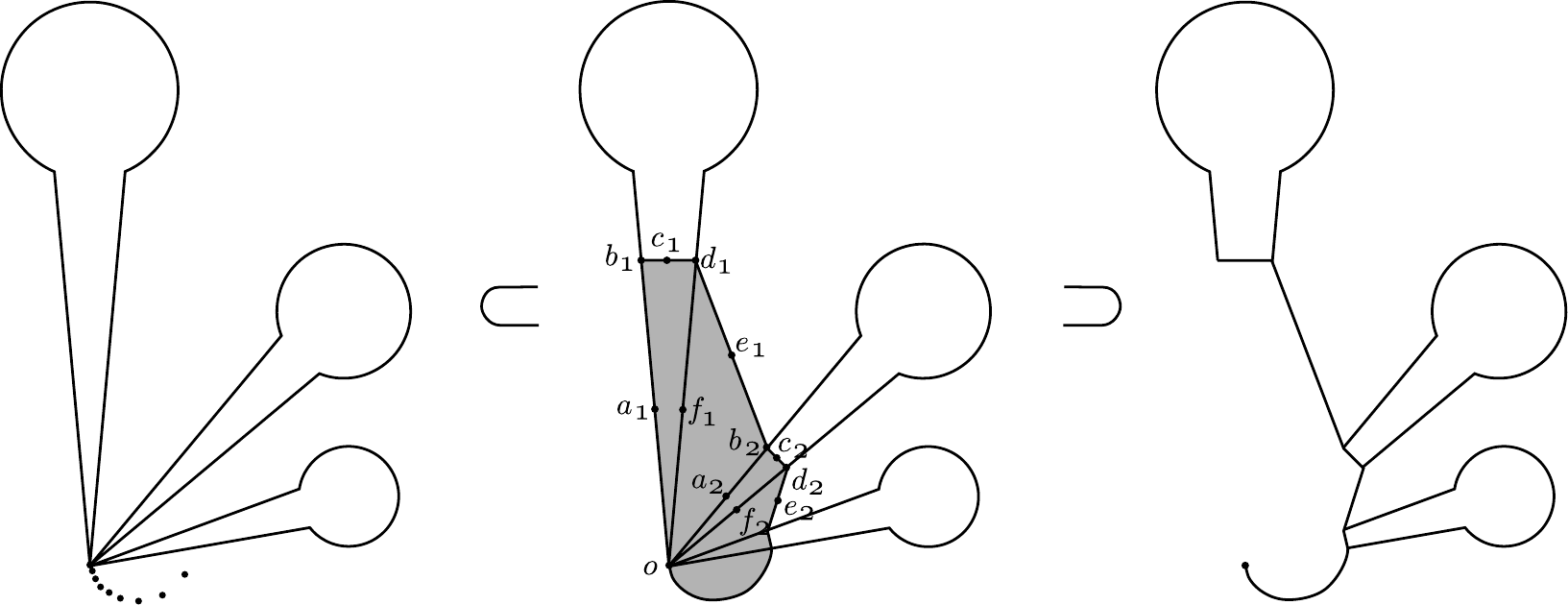}
\label{Supportfigure}
\caption{Homotopy representatives of the Hawaiian earring $\mathbb{H}$.}
\end{figure}
%%%%%%%%%%%%%%%%%%%%%%%%%%%%%%%%%%%%%%%
%%%%%%%%%%%%%%%%%%%%%%%%%%%%%%%%%%%%%%%
%%%%%%%%%%%%%%%%%%%%%%%%%%%%%%%%%%%%%%%
%%%%%%%%%%%%%%%%%%%%%%%%%%%%%%%%%%%%%%%

The Modified Hawaiian earring
$\mathcal{MH}$
is
homotopy equivalent to the Hawaiian
earring,
$\mathcal{MH} \simeq \mathbb{H}.$ Indeed, both of these
spaces are deformation retracts of the third one, as indicated
in the middle of
Figure 1  (all points $a_n, b_n, c_n, d_n, e_n,$ and
$f_n$ converge
to the point $o$).
The piecewise linear deformation which
moves the points $c_n$ and $e_n$
to the point $o$,
and fixes the
points $b_n$ and $d_n$,
yields a
space homeomorphic to the Hawaiian
earring $\mathbb{H}$.

The piecewise
linear deformation which moves
sequentially the points $a_n$ to the points $d_n,$
the segments
$[f_n, d_n]$ to $[e_n, d_n]$,
and the segments $[o, f_n]$ to the
line $[o, b_{n+1}] \cup [b_{n+1}, e_n]$,
with  fixed points $o, b_n, c_n,
d_n$, yields  the space $\mathcal{MH}$, therefore
by Proposition~\ref{deform}, $U$
is homotopy equivalent to the
Hawaiian
earrring $\mathbb{H}$.

%%%%%%%%%%%%%%%%%%%%%%%%%%%%%%%%%%%%%%%
%%%%%%%%%%%%%%%%%%%%%%%%%%%%%%%%%%%%%%%
\begin{figure}[fig2]
\includegraphics[width=1.00\textwidth]{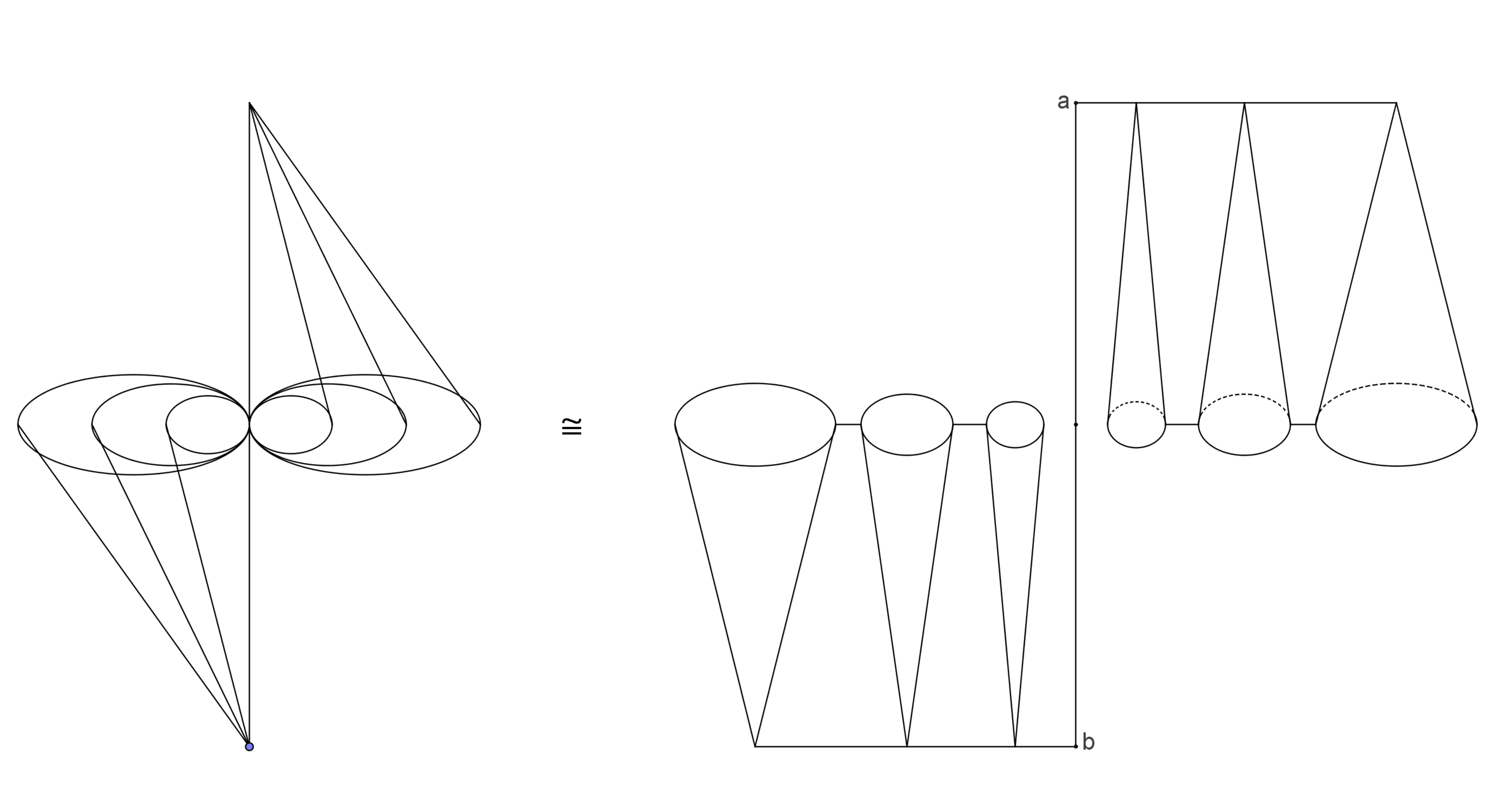}
\label{Supportfigure}
\caption{The Griffiths space $\mathcal{G}$ and the Modified Griffiths space $\mathcal{MG}.$}
\end{figure}
%%%%%%%%%%%%%%%%%%%%%%%%%%%%%%%%%%%%%%%
%%%%%%%%%%%%%%%%%%%%%%%%%%%%%%%%%%%%%%%
%%%%%%%%%%%%%%%%%%%%%%%%%%%%%%%%%%%%%%%
%%%%%%%%%%%%%%%%%%%%%%%%%%%%%%%%%%%%%%%

Let us define the {\sl Modified Griffiths space} $\mathcal{MG}.$
To this end let us introduce some new spaces.
Let $\emph{HA}_-$ be
the space symmetric in
$\mathbb{R}^3$
to $\emph{HA}$,
with respect to the point
$\theta$:
$$\emph{HA}_- = \bigcup_{n = 1}^{\infty} C(C_n^{-}, (0,
\frac{-1}{n}, -1))$$
$$\cup \bigcup_{n = 1}^{\infty}\{(0, y, 0)| -y
\in [ \frac{3n+7}{3(n+1)(n+2)}, \frac{3n+2}{3(n)(n+1)}]\} \cup
\{\theta\},$$ where
$$C_n^- = \{(x,y, 0)\in \mathbb{R}^3 |\ x^2 + (y+\frac{1}{n})^2 =
(\frac{1}{3n(n+1)})^2\}_{n\in \mathbb{N}}.$$

Define the {\sl convex hull} $L(M)$ of a subset $M$ of
$\mathbb{R}^3$ as the intersection of all convex sets in
$\mathbb{R}^3$ containing the set $M$.
For $n\in \mathbb{N}$, the sets $F^+_n$ and
$F^-_n$  are defined as the convex hulls of
the quadruples of points of $\mathcal{R}^3$ as follows:
$$F^+_n = L(\{(0, \frac{3n+7}{3(n+1)(n+2)}, 0),\ (0,
\frac{3n+2}{3n(n+1)}, 0),\ (0, \frac{1}{n+1}, 1),\ (0,
\frac{1}{n}, 1)\})$$
and
$$F^-_n = L(\{(0,
\frac{-3n-7}{3(n+1)(n+2)}, 0),\ (0, \frac{-3n-2}{3n(n+1)}, 0),\
(0, \frac{-1}{n+1}, -1),\ (0, \frac{-1}{n}, -1))\}),$$
respectively. The Modified Griffiths space
$\mathcal{MG}$
is then
defined as the
following subspace of $\mathbb{R}^3:$
$$\mathcal{MG} = \emph{HA}\cup \emph{HA}_-\cup F^+_n\cup F^-_n\cup L(\{(0, 0, -1), (0, 0,
1)\}),$$
where $L(\{(0, 0, -1), (0, 0, 1)\})$ is a
compact segment
in $\mathbb{R}^3$ with end points at $(0, 0, 1)$ and $(0, 0,
-1),$ see Figure 2.

Now, let us also define the {\it Modified Harmonic archipelago
$\mathcal{MHA}$}. Let $a = (0, 0, 1)$ and $b = (0, 0, -1)$ be two
points of the $\mathcal{MG}$ and set
$$ \mathcal{MHA} = \mathcal{MG}  \setminus \{a, b\}.$$

%%%%%%%%%%%%%%%%%%%%%%%%%%%%%%%%%%%%%%%
%%%%%%%%%%%%%%%%%%%%%%%%%%%%%%%%%%%%%%%
%%%%%%%%%%%%%%%%%%%%%%%%%%%%%%%%%%%%%%%
%%%%%%%%%%%%%%%%%%%%%%%%%%%%%%%%%%%%%%%
\begin{figure}[fig3]
\includegraphics[width=1.20\textwidth]{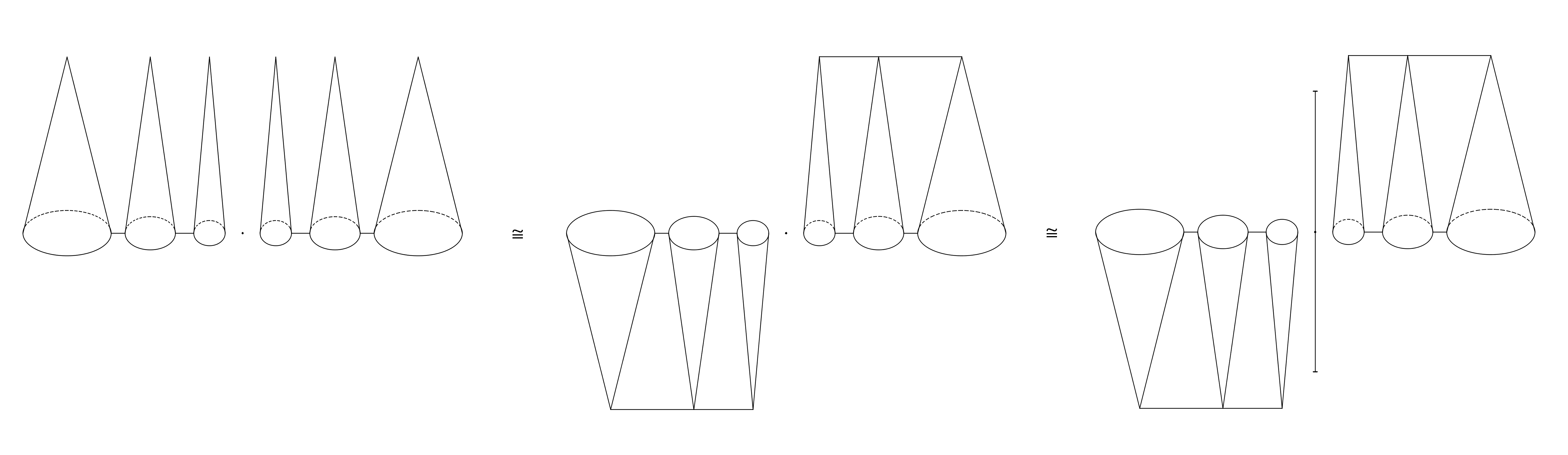}
\label{Supportfigure}
\caption{Homotopy representatives of the Harmonic archipelago $\mathcal{HA}.$}
\end{figure}
%%%%%%%%%%%%%%%%%%%%%%%%%%%%%%%%%%%%%%%
%%%%%%%%%%%%%%%%%%%%%%%%%%%%%%%%%%%%%%%
%%%%%%%%%%%%%%%%%%%%%%%%%%%%%%%%%%%%%%%
%%%%%%%%%%%%%%%%%%%%%%%%%%%%%%%%%%%%%%%

\begin{prop}{\label{Splitting}}
{\it Suppose that in the short exact sequence
$$0\longrightarrow
A\stackrel{\alpha}\longrightarrow B
\stackrel{\beta}\longrightarrow C \longrightarrow 0$$
there exists
a
projection $p:B\to A$ (i.e. $p$ is a homomorphism such
that $p\circ \alpha:A\to A$ is the identity mapping $1_A$).
Then there exists an isomorphism $\varphi :B \rightarrow A \oplus
C$ 
such that
$\varphi (b) = (p(b), \beta (b)),$ for every $b\in B$.}
\end{prop}

In this case, the short exact sequence $0\longrightarrow
A\stackrel{\alpha}\longrightarrow B
\stackrel{\beta}\longrightarrow C \longrightarrow 0$
is said {\sl to split} \cite[Splitting Lemma, p.147]{H} or is {\sl
splitting} (cf. e.g. \cite[Lemma 9.1 and p. 38 ]{F1}).

The following statement is well known, see e.g. \cite{F1}.

\begin{prop}{\label{Ext}}
If the group $A$ is algebraically compact and $C$ is torsion-free
then every exact sequence $0\longrightarrow
A\stackrel{\alpha}\longrightarrow B
\stackrel{\beta}\longrightarrow C \longrightarrow 0$ splits.
\end{prop}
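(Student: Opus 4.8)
The plan is to deduce the statement from the pure-injectivity of algebraically compact groups, combined with Proposition~\ref{Splitting}. Recall that a subgroup $H$ of an abelian group $B$ is \emph{pure} if $nB\cap H=nH$ for every $n\in\mathbb{N}$, and that an abelian group is algebraically compact if and only if it is \emph{pure-injective}, meaning that every homomorphism into it from a pure subgroup of an abelian group extends over the whole group; this equivalence is precisely the structural characterization recorded in Fuchs's monograph \cite{F1}, and I would take it as the one non-routine ingredient. (If one prefers to start from the definition of algebraic compactness via solvability of finitely solvable systems of equations, the derivation of pure-injectivity from it is itself a short classical argument, but it is cleaner to cite the equivalence.)

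First I would check that, in the given sequence $0\to A\stackrel{\alpha}{\longrightarrow}B\stackrel{\beta}{\longrightarrow}C\to 0$, the image $\alpha(A)$ is pure in $B$. Suppose $a\in A$, $b\in B$ and $\alpha(a)=nb$ for some $n\in\mathbb{N}$. Applying $\beta$ gives $n\beta(b)=0$ in $C$; since $C$ is torsion-free, $\beta(b)=0$, so exactness yields $b=\alpha(a')$ for some $a'\in A$. Then $\alpha(a)=n\alpha(a')\in n\alpha(A)$, whence $nB\cap\alpha(A)=n\alpha(A)$. This is the only place torsion-freeness of $C$ is used.

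Next I would use that $\alpha\colon A\to\alpha(A)$ is an isomorphism (as $\alpha$ is injective) and consider its inverse $\gamma=\alpha^{-1}\colon\alpha(A)\to A$. Since $\alpha(A)$ is a pure subgroup of $B$ and $A$ is pure-injective, $\gamma$ extends to a homomorphism $p\colon B\to A$; then $p\circ\alpha=\gamma\circ\alpha=1_A$, so $p$ is a projection in the sense of Proposition~\ref{Splitting}. Applying that proposition produces an isomorphism $\varphi\colon B\to A\oplus C$ with $\varphi(b)=(p(b),\beta(b))$, so the sequence splits.

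The hard part is essentially only the choice of the right structural input, namely the equivalence of algebraic compactness with pure-injectivity; once that is granted, what remains are the two elementary verifications above, the more delicate of which is the purity check, and that is exactly where the hypothesis that $C$ is torsion-free is essential.
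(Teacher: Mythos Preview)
Your argument is correct and is precisely the standard proof: torsion-freeness of $C$ forces $\alpha(A)$ to be pure in $B$, and then the characterization of algebraically compact groups as pure-injective groups (Fuchs \cite{F1}, Theorem~38.1) yields the retraction $p$ needed for Proposition~\ref{Splitting}. The paper itself does not supply a proof of this proposition at all; it simply records it as well known with a reference to \cite{F1}, so there is nothing to compare against beyond noting that your argument is exactly the one underlying that citation.
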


Throughout this paper only
singular homology  $H_*$ and \v{C}ech
 cohomology $\check{H}^*$  with integer coefficients
will be used.

The following statement is a reformulation of a theorem of Eda and
Kawamura \cite{EK} (they used the notion of $p-$adic completion).

\begin{prop}{\label{H_1(H)}}
For the $1$-dimensional singular homology group of the Hawaiian
earrings $H_1(\mathbb{H})$ there exists the following exact
sequences which splits:
$$0 \rightarrow
(\prod_{i\in \mathbb{N}}\mathbb{Z}) / 
(\sum_{i\in \mathbb{N}}\mathbb{Z})
\rightarrow
H_1(\mathbb{H})
\stackrel{\sigma}\rightarrow
\prod_{i\in \mathbb{N}}\mathbb{Z}
\rightarrow 0$$
or, equivalently
$$0 \rightarrow
(\sum_{i\in c} \mathbb{Q})\oplus \prod_{p \ \hbox{\rm prime}
}(\prod_{i\in
\mathbb{N}}\mathbb{J}_p )\rightarrow H_1(\mathbb{H})
\stackrel{\sigma}\rightarrow \prod_{i\in
\mathbb{N}}\mathbb{Z}\rightarrow 0.$$
\end{prop}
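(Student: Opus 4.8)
The plan is to take the Eda--Kawamura computation of $H_1(\mathbb{H})$ \cite{EK} as the input and to supply the two purely algebraic additions that the ``reformulation'' needs: the splitting of the exact sequence, and the identification of the two descriptions of its kernel. Recall that collapsing every circle of the Hawaiian earring except $C_n$ defines a retraction $r_n\colon\mathbb{H}\to C_n$, and that the induced homomorphism $\sigma=(r_{n*})_{n\in\mathbb{N}}\colon H_1(\mathbb{H})\to\prod_{n\in\mathbb{N}}H_1(C_n)=\prod_{n\in\mathbb{N}}\mathbb{Z}$ is onto: any $(m_n)\in\prod_n\mathbb{Z}$ is realized by the loop that winds $m_n$ times around $C_n$ for each $n$, the infinitely many pieces fitting together into one loop because the circles shrink to the base point. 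Eda and Kawamura identify $\ker\sigma$ (their formulation is phrased with $p$-adic completions); a routine rewriting puts it in the form $\ker\sigma\cong\bigl(\sum_{i\in c}\mathbb{Q}\bigr)\oplus\prod_{p\ \mathrm{prime}}\prod_{i\in\mathbb{N}}\mathbb{J}_p$, which is exactly the kernel occurring in the second exact sequence of the statement.

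The first point to check is that this sequence splits. Its cokernel $\prod_{i\in\mathbb{N}}\mathbb{Z}$ is torsion-free, and its kernel is algebraically compact: $\sum_{i\in c}\mathbb{Q}$ is divisible, each $\prod_{i\in\mathbb{N}}\mathbb{J}_p$ is a direct product of copies of the algebraically compact group $\mathbb{J}_p$ and so is algebraically compact, a direct product of algebraically compact groups is algebraically compact, and a finite direct sum of algebraically compact groups is again algebraically compact. Proposition~\ref{Ext} therefore applies and yields the splitting, and Proposition~\ref{Splitting} then makes the isomorphism with $\bigl(\sum_{i\in c}\mathbb{Q}\bigr)\oplus\prod_{p}\prod_{i\in\mathbb{N}}\mathbb{J}_p\oplus\prod_{i\in\mathbb{N}}\mathbb{Z}$ explicit.

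The second point is the equivalence of the two forms, i.e. the isomorphism $(\prod_{i\in\mathbb{N}}\mathbb{Z})/(\sum_{i\in\mathbb{N}}\mathbb{Z})\cong\bigl(\sum_{i\in c}\mathbb{Q}\bigr)\oplus\prod_{p\ \mathrm{prime}}\prod_{i\in\mathbb{N}}\mathbb{J}_p$; this is a known isomorphism (cf. \cite{F1}), and I would recall its proof via the structure theory of algebraically compact groups. Both sides are torsion-free and algebraically compact (for the left side, algebraic compactness is the one genuinely nontrivial ingredient here, and it is a classical fact), so each is determined up to isomorphism by the rank of its maximal divisible subgroup and, for every prime $p$, by the $\mathbb{F}_p$-dimension of its quotient modulo $p$. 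On the right side these invariants are visibly the cardinality of the continuum $c$: the divisible part is $\sum_{i\in c}\mathbb{Q}$, and $\prod_{i\in\mathbb{N}}\mathbb{J}_p/p\prod_{i\in\mathbb{N}}\mathbb{J}_p\cong\prod_{i\in\mathbb{N}}\mathbb{F}_p$ has dimension $c$. For $A=(\prod\mathbb{Z})/(\sum\mathbb{Z})$ one has $A/pA\cong(\prod_{i\in\mathbb{N}}\mathbb{F}_p)/(\sum_{i\in\mathbb{N}}\mathbb{F}_p)$, again of $\mathbb{F}_p$-dimension $c$, and $A$ contains a divisible subgroup of rank $c$: for instance the image of $\prod_n n!\,\mathbb{Z}$, since the class of a sequence $(a_n)$ is divisible by every positive integer precisely when every integer divides all but finitely many of the $a_n$. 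The invariants match, so the two groups are isomorphic, and combined with the splitting this also yields the first exact sequence.

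The real input is the Eda--Kawamura theorem; beyond that, the one step that takes genuine (if standard) care is the last one, checking that the maximal divisible subgroup of $(\prod\mathbb{Z})/(\sum\mathbb{Z})$ really has rank $c$ and not merely positive rank, together with the known fact that this quotient is algebraically compact. The remaining items --- torsion-freeness of $\prod\mathbb{Z}$, algebraic compactness of the products and direct sums on the right, surjectivity of $\sigma$, and the two invocations of Propositions~\ref{Ext} and~\ref{Splitting} --- are routine.
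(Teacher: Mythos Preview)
Your proof is correct and follows essentially the same route as the paper's: both take the Eda--Kawamura description of $\ker\sigma$ as the substantive input and then make the algebraic identifications. The differences are only in what is cited versus argued. The paper simply quotes \cite{EK} for the exact sequence \emph{together with} its splitting, and then invokes Balcerzyk's theorem \cite{B} (also recorded in \cite[VII.42, Exercise~7]{F1}) for the isomorphism $(\prod_{i\in\mathbb{N}}\mathbb{Z})/(\sum_{i\in\mathbb{N}}\mathbb{Z})\cong(\sum_{i\in c}\mathbb{Q})\oplus\prod_{p}A_p$, and Theorem~40.2 of \cite{F1} for the passage to $\prod_{p}\prod_{i\in\mathbb{N}}\mathbb{J}_p$. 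You instead re-derive the splitting from Proposition~\ref{Ext} (algebraic compactness of the kernel and torsion-freeness of $\prod\mathbb{Z}$) and give a direct proof of Balcerzyk's isomorphism via the invariants of torsion-free algebraically compact groups. That makes your write-up more self-contained, at the cost of re-proving results the paper is content to cite; mathematically the two arguments coincide.
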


\begin{proof}
It was proved in \cite{EK}  that there exists the following exact
sequence which is splitting:
$$0 \rightarrow
(\prod_{p \ \hbox{\rm prime}}A_p)
\oplus (\sum_{i\in c} \mathbb{Q}) \rightarrow H_1(\mathbb{H})
\stackrel{\sigma}\rightarrow \prod_{i\in
\mathbb{N}}\mathbb{Z}\rightarrow 0$$  where $A_p$ is the $p-$adic
completion of the direct sum of $p-$adic integers
$\oplus_c\mathbb{J}_p,$ and $c$ is the continuum cardinal.
According to
a theorem of Balcerzyk \cite{B} \cite[VII.42,\ Exercise 7]{F1},
we have 
$$(\prod_{i\in \mathbb{N}}\mathbb{Z}) /
(\sum_{i\in \mathbb{N}}\mathbb{Z})
\cong 
(\prod_{p \ \hbox{\rm prime}}A_p)
\oplus (\sum_{i\in
c} \mathbb{Q}).$$ Therefore the first desired isomorphism follows.

The second isomorphism again
follows from Theorem 3.1 of \cite{EK}
and Theorem 40.2 of \cite{F1}.
\end{proof}

\begin{prop}{\label{Inverse Lim}}\rm{(}cf. \cite{H}\rm{)}
\it{Let the space $X$ be a countable union of an increasing system
of open sets $\{U_i\}_{i\in \mathbb N}.$
Then for the \v{C}ech cohomology $\check{H}^*$
there exists the following exact sequence:
$$0\rightarrow \underleftarrow{\rm{lim}}^{(1)}\check{H}^{n-1}(U_i) \rightarrow \check{H}^n(X)
\rightarrow \underleftarrow{\rm{lim}}\ \check{H}^n(U_i)\rightarrow
0,$$ where $\underleftarrow{\rm{lim}}^{(1)}$ is the first derived
functor of the inverse limit functor
$\underleftarrow{\rm{lim}}.$}
\end{prop}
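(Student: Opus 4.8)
The plan is to reduce to sheaf cohomology and then run the standard $\varprojlim^{(1)}$ bookkeeping (as in \cite{H}) at the level of cochain complexes. I would prove the statement for $X$, and hence all the $U_i$, paracompact — which is all that is needed here, since in our application every space in sight is a subset of $\mathbb R^3$, hence separable metric, hence paracompact. The first thing to record is that on a paracompact space \v Cech cohomology with integer coefficients agrees naturally with sheaf cohomology of the constant sheaf $\mathbb Z_X$, and is therefore computed by the global sections of any flasque resolution of $\mathbb Z_X$.

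Next I would fix a flasque resolution $0\to\mathbb Z_X\to\mathcal I^0\to\mathcal I^1\to\cdots$ (for instance the canonical Godement resolution). For every open $U\subseteq X$ the restriction $\mathcal I^\bullet|_U$ is again a flasque resolution of $\mathbb Z_U$, so $\check H^n(U)\cong H^n(\Gamma(U,\mathcal I^\bullet))$; write $A_i^\bullet=\Gamma(U_i,\mathcal I^\bullet)$. Two facts would then be isolated: (i) since $\{U_i\}$ is an increasing open cover of $X$, the sheaf gluing axiom gives $\Gamma(X,\mathcal I^n)=\varprojlim_i\Gamma(U_i,\mathcal I^n)$ for every $n$, i.e. $\Gamma(X,\mathcal I^\bullet)=\varprojlim_i A_i^\bullet$ as complexes; and (ii) flasqueness makes each restriction map $\Gamma(U_{i+1},\mathcal I^n)\to\Gamma(U_i,\mathcal I^n)$ surjective, so every tower $\{A_i^n\}_i$ is Mittag--Leffler and $\varprojlim^{(1)}_i A_i^n=0$.

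Granting (i) and (ii), one obtains a short exact sequence of cochain complexes
$$0\longrightarrow\Gamma(X,\mathcal I^\bullet)\longrightarrow\prod_{i}A_i^\bullet\xrightarrow{\,1-s\,}\prod_{i}A_i^\bullet\longrightarrow0,$$
where $s$ is the shift induced by the bonding maps (exactness on the right uses (ii), exactness on the left is (i)). Passing to the long exact cohomology sequence, and using $H^n(\prod_iA_i^\bullet)\cong\prod_i\check H^n(U_i)$ together with the standard identifications $\ker(1-s)=\varprojlim_i\check H^n(U_i)$ and $\operatorname{coker}(1-s)=\varprojlim^{(1)}_i\check H^{n}(U_i)$ on these products, the sequence breaks up into the asserted short exact sequences
$$0\to\varprojlim^{(1)}_i\check H^{n-1}(U_i)\to\check H^n(X)\to\varprojlim_i\check H^n(U_i)\to0.$$

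I expect the only genuinely nontrivial point to be the cochain model itself — fact (i) together with the claim that $\mathcal I^\bullet|_U$ still computes $\check H^*(U)$. Through sheaf cohomology this is routine, as indicated above. A reader preferring to stay strictly inside \v Cech theory would instead have to exhibit a cofinal family of open covers of $X$ whose restrictions remain cofinal in the covers of each $U_i$ and assemble into a tower of \v Cech complexes with surjective bonding maps (using metrizability of $X$) — which amounts to re-proving the flasque fact by hand. Everything after that point is the purely formal $\varprojlim^{(1)}$ argument recalled above.
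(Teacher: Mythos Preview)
The paper does not supply a proof of this proposition; it is stated with a bare reference to Hatcher \cite{H} and then used as a black box. Your argument is correct: identifying \v Cech with sheaf cohomology on paracompact spaces, using a flasque (Godement) resolution so that the towers $\{\Gamma(U_i,\mathcal I^n)\}_i$ have surjective transition maps, and then running the $(1-s)$ short exact sequence of cochain complexes is exactly the standard derivation of the Milnor $\varprojlim^{(1)}$-sequence in this setting. Your paracompactness restriction is harmless for the paper's purposes, since every space appearing in the proof of Theorem~\ref{H^2(HA)} is a subspace of $\mathbb R^3$ and hence metrizable. One could add that Hatcher's actual statement is for singular cohomology of a CW complex filtered by subcomplexes, so the paper's citation is somewhat loose; your sheaf-theoretic route has the advantage of addressing \v Cech cohomology of an increasing open cover directly, which is what the application genuinely needs.
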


\section{Proofs of the Main Theorems}

\subsection{Proof of Theorem~\ref{Homology.}}
\begin{proof}
Since the Harmonic archipelago
$\mathcal{HA}$
is homotopy equivalent to the
Formal Harmonic archipelago
$\emph{HA},$
it suffices to calculate the group
$H_1(\emph{HA})$. Let $U$ and $V$ be the open sets defined as
follows:
$$U = \emph{HA} \cap \{(x, y, z)\in \mathbb{R}^3|\
z\in [0, \frac{2}{3})\}, \ V = \emph{HA} \cap \{(x, y, z)\in
\mathbb{R}^3 |\ z > \frac{1}{3}\}.$$
Consider the following part
of the Mayer-Vietoris sequences of the triad $(HA, U, V)$:
$$H_2(HA)\rightarrow H_1(U \cap V)\stackrel{i}\rightarrow H_1(U)\oplus H_1(V)
\stackrel{j}\rightarrow$$
$$H_1(\emph{HA})\stackrel{\delta}\rightarrow
H_0(U\cap V)\rightarrow H_0(U)\oplus H_0(V).$$
Obviously, $V$ is
homotopy equivalent to a
countable discrete union of points and the
space $U\cap V$ is homotopy equivalent to a
discrete countable
union of circles.

The homomorphism $H_0(U\cap V)\rightarrow
H_0(V)$ is an
isomorphism since the 0-dimensional homology group is
isomorphic to the direct sum of $Z$ cardinality of
path-connectedness components  \cite[Theorem 4.4.5]{S}.
The space
$U$ is homotopy equivalent to  $\mathcal{MH}$ since
$\mathcal{MH}$ is a
deformation retract of $U$  and $\mathcal{MH}
\simeq \mathbb{H}$,
therefore $U \simeq \mathbb{H}.$

The singular homology groups are homology groups with compact
support, therefore $H_n(HA) \cong \underrightarrow{\rm{lim}}\
H_n(P),$ where $P$ are Peano subcontinua of $HA$
(cf. \cite[Theorem
4.4.6]{S}).
Obviously, there exists a confinal sequence of Peano
continua such that every $P$ is homotopy equivalent to the
Hawaiian earring
$\mathbb{H}$
and $H_n(P) \cong 0$ for all $n > 1$ (by \cite{CF},
$H_n$ are trivial for one-dimensional spaces, for all $n > 1$),
therefore $H_n(HA) \cong 0,$
for all $n > 1.$ In particular,
$H_2(HA)
\cong 0.$

Therefore we have following commutative diagram with exact rows
and columns:
$$\begin{array}{ccccccccc}
&& && 0 && 0 &&    \\
&& && \downarrow && \downarrow &&    \\
&& && Ker (\sigma) &\stackrel{\cong}\rightarrow & Ker( \sigma) &&    \\
&& && \varphi_1\downarrow && \varphi_2\downarrow &&    \\
0 &\rightarrow &\sum_{i\in \mathbb{N}}\mathbb{Z}
&\stackrel{\varphi_3}\rightarrow & H_{1}(\emph{H})
&\stackrel{\varphi_4}\rightarrow & H_{1}(\emph{HA})
&\rightarrow & 0\\
&& \downarrow \cong && \sigma\downarrow && \varphi_5\downarrow &&    \\
0 &\rightarrow &\sum_{i\in \mathbb{N}}\mathbb{Z}
&\stackrel{\varphi_6}\rightarrow & \prod_{i\in
\mathbb{N}}\mathbb{Z} &\stackrel{\varphi_7}\rightarrow &
(\prod_{i\in \mathbb{N}} \mathbb{Z})
/(\sum_{i\in \mathbb{N}}\mathbb{Z}) &\rightarrow & 0\\
&& && \downarrow && \downarrow &&    \\
&& && 0 && 0 &&
\end{array}$$
in which homomorphisms $\varphi_3$ and $\varphi_4$  correspond to
$i$ and $j$, respectively. The homomorphism $\sigma$ 
is defined for any
element $[l]$ of the $H_1(\mathbb{H})$
as $(l_1, l_2,
l_3, \dots),$ where $l_i$ is the winding number of the loop $l$
around the $i-$th circle $S_i$ (cf. \cite[p. 310]{EK}).
It follows that
the composition $\sigma \varphi_3,$ which we can identify with
$\varphi_6,$ is a monomorphism and $\rm{Im}(\varphi_3)\cap
\rm{Ker}(\sigma) = 0.$ Then  the composition $\varphi_1\varphi_4$
is a monomorphism which we shall identify with $\varphi_2$.

Homomorphism $\varphi_7$ is the quotient mapping. The homomorphism
$\varphi_5$ is defined as follows. Take any element $a\in
H_1(HA.)$ Due to the exactness of the middle row there exists an
element $b\in H_1(H)$ such that $\varphi_4(b) = a.$ Define
$\varphi_5(a) \equiv \varphi_7\sigma(b).$

Let us show that this mapping is
well-defined. Suppose that $\varphi_4(b') = a$. Then the difference
$b - b'$ belongs to  $\rm{Im}(\varphi_3)$ and $\sigma (b -
b')\in \rm{Im}(\varphi_6)$, therefore $\varphi_7(b - b') = 0.$
This means that $\varphi_7(b) = \varphi_7(b')$ and the mapping
$\varphi_5$ is well-defined.

If $a = a_1 + a_2$ then there exist $b_1$ and $b_2$ such that
$\varphi_4(b_1) = a_1$ and $\varphi_4(b_2) = a_2.$ Since $\sigma$
and $\varphi_7$ are homomorphisms we have $\varphi_5(a) =
\varphi_7\sigma(b_1 + b_2) = \varphi_7\sigma(b_1 ) +
\varphi_7\sigma(b_2) = \varphi_5(a_1) + \varphi_5(a_2)$ and
$\varphi_5$ is
indeed a homomorphism. Since $\sigma$ and
$\varphi_7$ are epimorphisms it follows that $\varphi_5$ is an
epimorphism. The composition $\varphi_5\varphi_2$ is trivial since
the composition $\sigma\varphi_1$ is the zero homomorphism. If
$a\in H_1(HA) $ such that $\varphi_5(a) = 0$ then
$\varphi_7\sigma(b) = 0,$ for any $b$
for the corresponding $a.$

Choose
one of these elements $b.$ It follows that there exists $c $ such
that $\varphi_6(c) = \sigma (b)$. Since the left projection in the
diagram is
an isomorphism there exists $c'$ such that $\sigma
(\varphi_3 (c')) = \sigma (b) .$ It follows that $b - \varphi_3
(c') = \varphi_1(d)$, for some element $d.$ Then $\varphi_4(b -
\varphi_3 (c')) = \varphi_4\varphi_1(d)$, but $\varphi_4(\varphi_3
(c')) = 0 $ and $\varphi_4(b) = a$, therefore $a = \varphi_2(d)$
 and the right column is an
exact sequence.

By  Proposition \ref{H_1(H)} we have that $Ker (\sigma) \cong
\prod_{i\in \mathbb{N}}\mathbb{Z}/\sum_{i\in
\mathbb{N}}\mathbb{Z}.$ By  Proposition \ref{Ext}, the right
column splits since the group $\prod_{i\in
\mathbb{N}}\mathbb{Z}/\sum_{i\in \mathbb{N}}\mathbb{Z}$ is
algebraically compact  and
torsion-free \cite[Corollary 42.2]{F1} .

Therefore we have the following
exact sequence which splits:
$$0 \rightarrow 
(\prod_{i\in \mathbb{N}} \mathbb{Z})
/ (\sum_{i\in \mathbb{N}}\mathbb{Z})
\rightarrow
H_{1}(\emph{HA})\stackrel{p}\rightarrow
(\prod_{i\in \mathbb{N}} \mathbb{Z})
/ (\sum_{i\in \mathbb{N}}\mathbb{Z})
\rightarrow 0$$
and hence:
$$H_{1}(\emph{HA})\cong
(\prod_{i\in \mathbb{N}} \mathbb{Z})
/ (\sum_{i\in \mathbb{N}}\mathbb{Z})
\oplus
(\prod_{i\in \mathbb{N}} \mathbb{Z})
/ (\sum_{i\in \mathbb{N}}\mathbb{Z}).$$
However,
obviously
$$(\prod_{i\in \mathbb{N}} \mathbb{Z})
/ (\sum_{i\in \mathbb{N}}\mathbb{Z})
\oplus
(\prod_{i\in \mathbb{N}} \mathbb{Z})
/ (\sum_{i\in \mathbb{N}}\mathbb{Z})
\cong
(\prod_{i\in \mathbb{N}} \mathbb{Z})
/ (\sum_{i\in \mathbb{N}}\mathbb{Z}),$$
therefore 
$$H_1(\mathcal{HA})\cong H_1(HA)\cong
(\prod_{i\in \mathbb{N}} \mathbb{Z})
/ (\sum_{i\in \mathbb{N}}\mathbb{Z}).$$

\end{proof}

\subsection{Proof of  Theorem~\ref{H^2(HA)}}

\begin{proof}

Let $U_1 = U,$ (where $U$ was defined in the proof of
Theorem~\ref{Homology.})  and $U_i$ at $i > 1$ be the following
open subspaces of $\emph{HA}$:
$$U_i =
\emph{HA} \cap \{(x, y, z)\in \mathbb{R}^3|\ y > \frac{2i +
1}{2i(i + 1)}\}\cup U.$$

Obviously, $U_i$ is homotopy equivalent to the Modified Hawaiian
earring and therefore to the Hawaiian earring. Since the Hawaiian
earring can be presented as the
inverse limit of bouquets of finite
numbers of cycles:
$$S^1_{1}\stackrel{\pi_1}\longleftarrow S^1_{1} \bigvee S^1_{2}\stackrel{\pi_2}\longleftarrow
\cdots \stackrel{\pi_{n-1}}\longleftarrow \bigvee_{j=1}^{n}S^1_{j}
\stackrel{\pi_n}\longleftarrow \bigvee_{j=1}^{n+1}S^1_{j}\cdots,$$
where the
projections $\pi_n$ map the corresponding circles
$S_{n+1}^1$ to the base point of the bouquets and map all other
circles identically, it follows that the $1$-dimensional \v{C}ech
cohomology of the Hawaiian earring is isomorphic to $\sum_{j =
1}^{\infty}\mathbb{Z}$.
Therefore $\check{H}^1 (U_i) \cong \sum_{j
= 1}^{\infty}\mathbb{Z}.$ By Proposition~\ref{Inverse Lim}, we
have the following exact sequences:
$$0\rightarrow \underleftarrow{\rm{lim}}^{(1)}\check{H}^{1}(U_i) \rightarrow
\check{H}^2(HA)\rightarrow \underleftarrow{\rm{lim}} \
\check{H}^2(U_i)\rightarrow 0.$$

The embedding $U_i \subset U_{i + 1}$ generates the monomorphism
$\check{H}^{1}(U_i)\leftarrow \check{H}^{1}(U_{i+1})$ which we can
identify with $\sum_{j = 1}^{\infty}\mathbb{Z}
\stackrel{p}\leftarrow \sum_{j = 1 }^{\infty}\mathbb{Z},$ where $p$
acts by the rule $p(a_1, a_2, a_3, \cdots) = (0, a_1, a_2, a_3,
\cdots),\ \ $ therefore
$$\underleftarrow{\rm{lim}}^{(1)}\check{H}^{1}(U_i)\cong
\underleftarrow{\rm{lim}^{(1)}}(\Sigma_{1}^{\infty}Z\stackrel{p}\leftarrow
\Sigma_{1}^{\infty}Z).$$

We have following commutative diagram with exact rows:

$$\begin{array}{ccccccccc}
0&\rightarrow& \sum_{1}^{\infty}\mathbb{Z} &\stackrel{q}\rightarrow& \sum_{1}^{\infty}\mathbb{Z} &\rightarrow & 0 &&    \\
&&p \uparrow && q\uparrow && l_1\uparrow &&    \\
0&\rightarrow& \sum_{1}^{\infty}\mathbb{Z} &\stackrel{p}\rightarrow& \sum_{1}^{\infty}\mathbb{Z} &\rightarrow & \mathbb{Z}&\rightarrow &  0  \\
&&p \uparrow && q\uparrow && l_2\uparrow &&    \\
0&\rightarrow& \sum_{1}^{\infty}\mathbb{Z} &\stackrel{p^2}\rightarrow& \sum_{1}^{\infty}\mathbb{Z} &\rightarrow & \mathbb{Z\bigoplus \mathbb{Z}}&\rightarrow &  0  \\
&&p \uparrow&& q\uparrow && l_3\uparrow &&
\end{array},$$
where $q$ is the
identity mapping and $l_n: \sum_{1}^n\mathbb{Z} \rightarrow
\sum_{1}^{n-1}\mathbb{Z} $ is the
projection defined by
$$l_n(a_1,
a_2,\cdots, a_{n-1}, a_n) = (a_1, a_2,\cdots, a_{n-1}).$$

For the  inverse limit functor and its first derived functor we
have following exact sequence \cite[Property 5 ]{Kh}):
$$0\rightarrow \underleftarrow{\rm{lim}} (\Sigma_{1}^{\infty}\mathbb{Z}\stackrel{p}\leftarrow \Sigma_{1}^{\infty}\mathbb{Z})\rightarrow
\underleftarrow{\rm{lim}}(\Sigma_{1}^{\infty}\mathbb{Z}\stackrel{q}\leftarrow
\Sigma_{1}^{\infty}\mathbb{Z})\rightarrow
\underleftarrow{\rm{lim}}(\Sigma_{1}^{n-1}\mathbb{Z}\stackrel{l_n}\leftarrow
\Sigma_{1}^{n}\mathbb{Z})\rightarrow$$
$$\rightarrow
\underleftarrow{\rm{lim}^{(1)}}(\Sigma_{1}^{\infty}\mathbb{Z}\stackrel{p}\leftarrow
\Sigma_{1}^{\infty}\mathbb{Z})\rightarrow
\underleftarrow{\rm{lim}^{(1)}}(\Sigma_{1}^{\infty}\mathbb{Z}\stackrel{q}\leftarrow
\Sigma_{1}^{\infty}\mathbb{Z})\rightarrow
\underleftarrow{\rm{lim}^{(1)}}(\Sigma_{1}^{n-1}\mathbb{Z}\stackrel{l_n}\leftarrow
\Sigma_{1}^{n}\mathbb{Z})\rightarrow 0. $$

Obviously,
$$\underleftarrow{\rm{lim}} (\Sigma_{1}^{\infty}\mathbb{Z}
\stackrel{p}\leftarrow \Sigma_{1}^{\infty}\mathbb{Z})\cong 0,$$
$$\underleftarrow{\rm{lim}}(\Sigma_{1}^{\infty}\mathbb{Z}\stackrel{q}\leftarrow
\Sigma_{1}^{\infty}\mathbb{Z}) \cong \Sigma_{1}^{\infty}\mathbb{Z},$$
$$\underleftarrow{\rm{lim}}(\Sigma_{1}^{n-1}\mathbb{Z}\stackrel{l_n}\leftarrow
\Sigma_{1}^{n}\mathbb{Z}) \cong \prod_1^{\infty}\mathbb{Z}$$ and
$$\underleftarrow{\rm{lim}^{(1)}}(\Sigma_{1}^{\infty}\mathbb{Z}\stackrel{q}\leftarrow
\Sigma_{1}^{\infty}\mathbb{Z})\cong 0 $$ since $q$ is the
identity mapping.

It follows from this diagram that
$$\underleftarrow{\rm{lim}^{(1)}}(\Sigma_{1}^{\infty}\mathbb{Z}\stackrel{p}\leftarrow
\Sigma_{1}^{\infty}\mathbb{Z}) \cong 
(\prod_{i = 1}^{\infty}\mathbb{Z})
/ (\sum_{i = 1}^{\infty}\mathbb{Z}),$$
therefore
$$\underleftarrow{\rm{lim}}^{(1)}\check{H}^{1}(U_i)\cong
(\prod_{i = 1}^{\infty}\mathbb{Z})
/ (\sum_{i = 1}^{\infty}\mathbb{Z}).$$

Since $U_i$ are homotopy equivalent to 1-dimensional space
(Hawaiian earring) it follows that $\underleftarrow{\rm{lim}} \
\check{H}^2(U_i) \cong 0$ and by  Proposition \ref{Inverse Lim}
we have
$$\check{H}^2(HA) \cong 
(\prod_{i = 1}^{\infty}\mathbb{Z})
/ (\sum_{i = 1}^{\infty}\mathbb{Z}).$$

Since, as it was mentioned in the proof of  Theorem \ref{H^2(HA)},
$\underleftarrow{\rm{lim}}\ \check{H}^{1}(U_i)\cong 0$ it follows
from the
exact sequence $$0\rightarrow
\underleftarrow{\rm{lim}}^{(1)}\check{H}^{0}(U_i) \rightarrow
\check{H}^1(HA)\rightarrow \underleftarrow{\rm{lim}} \
\check{H}^1(U_i)\rightarrow 0$$ that $\check{H}^1(HA)
\cong0.$
Since $\dim HA = 2$ it follows that $\check{H}^n(HA)
\cong 0,$ for all $n > 2.$

\end{proof}

\begin{rmk}
From the homotopical point of view, the spaces $\mathcal{G}$ and
$\mathcal{HA}$ are very close to each other -- it is possible to
show that  $\mathcal{G}$and $\mathcal{HA}$ are homotopy
equivalent to $\mathcal{MG}$ and $\mathcal{MHA}$, respectively.
However,
by definition of  $\mathcal{MHA},$ 
it follows that
$$\mathcal{MHA} = \mathcal{MG} \setminus \{a, b\},$$ for some pair of
points $a, b$ (see Figures 2 and 3).
\end{rmk}

\section{Acknowledgements}
This research was supported by the Slovenian Research Agency
grants P1-0292-0101, J1-2057-0101 and J1-4144-0101. We thank
Katsuya Eda and the referee for their
comments and suggestions.

\providecommand{\bysame}{\leavevmode\hbox to3em{\hrulefill}\thinspace}


\begin{thebibliography}{99}

\bibitem{B}
S.~Balcerzyk,
\emph{On factor groups of a complete direct sum of
infinite cyclic groups},
Bull. Acad. Polon. Sci. \textbf{7}
(1959), 141--142.

\bibitem{BS}
W.~A.~Bogley, A.~J.~Sieradski,
{\it Universal path spaces},
Preprint, Oregon State University, Corvallis, 1998.
http://people.oregonstate.edu/$\sim$bogleyw/research/ups.pdf

\bibitem{C}
G.~Conner,
{\it Some interesting open problems in low-dimensional
wild topology},
Talk at Workshop on Topology of Wild Spaces and Fractals,
July 4 - 8, 2011, Strobl, Austria,
http://dmg.tuwien.ac.at/nfn/wild$_{-}$top/index.html.

\bibitem{CF}
M.~L.~Curtis, M.~K.~Fort,
\emph{Singular homology of
one-diminsional spaces},
Ann. of Math. \textbf{69} (1959), 309--313.

\bibitem{E2}
K.~Eda,
\emph{The first singular homology groups of one point
union},
Quart. J. Oxford. (2) \textbf{42} (1991), 443--456.

\bibitem{E1}
K.~Eda,
\emph{The singular homology groups of certain wild spaces}, Personal note,  September 2011.

\bibitem{EK}
K.~Eda, K.~Kawamura,
\emph{The singular homology of the Hawaiian
earring},
J. London Math. Soc. (2) \textbf{62} (2000), 305--310.

\bibitem{F1}
L.~Fuchs,
\emph{Infinite Abelian Groups}, Vol. 1, Academic Press,
New York and London, 1970.

\bibitem{F2}
L.~Fuchs,
\emph{Infinite Abelian Groups}, Vol. 2, Academic Press,
New York and London, 1973.

\bibitem{G}
H.~B.~ Griffiths,
\emph{The fundamental group of two spaces with a
common point}, Quart. J. Math. \textbf{5} (1954), 175 -- 190.

\bibitem{H}
A.~Hatcher,
\emph{Algebraic Topology}, Cambridge University Press,
Cambridge,
2001.

\bibitem{Kh}
A.~E.~ Harlap,
\emph{Local homology and cohomology, homology
dimension and generalised manifolds}, Mat. Sbornik
\textbf{96} (1975), 347 -- 373 (in Russian);
Engl. transl. in: Math. USSR Sbor.
\textbf{25} (1975), 323 -- 349.


\bibitem{M}
M.~Meilstrup,
{\it Archipelago groups},
Talk at
Workshop on Topology of
Wild Spaces and Fractals, July 4 - 8, 2011, Strobl, Austria,
http://dmg.tuwien.ac.at/nfn/wild$_{-}$top/index.html.

\bibitem{S}
E.~H.~Spanier,
\emph{Algebraic Topology},
McGraw-Hill, New York, 1966.

\bibitem{W}
G.~W.~Whitehead,
\emph{Elements of Homotopy Theory},
Springer-Verlag, Berlin, 1978.

\end{thebibliography}
\end{document}